\def\marginpar#1{\ignorespaces}
\newtheorem{theorem}{Theorem}
\newtheorem{lemma}[theorem]{Lemma}
\theoremstyle{definition}
\newtheorem{remark}[theorem]{Remark}
\numberwithin{equation}{section}
\numberwithin{theorem}{section}
\def\AArm{\fam0 \rm}%
\newdimen\AAdi%
\newbox\AAbo%
\def\AAk#1#2{\setbox\AAbo=\hbox{#2}\AAdi=\wd\AAbo\kern#1\AAdi{}}%
\newcommand{\BBone}{{\ensuremath{{\AArm 1\AAk{-.8}{I}I}}}}
\def\eqref#1{(\ref{#1})}
\def\eqlabel#1{\def\@currentlabel{#1}}
\def\formula#1{\def\@tempa{#1}\let\@tempb\theequation\def\theequation{%
\hbox{#1}}\def\@currentlabel{(\theequation)}$$}
\def\endformula{\leqno\hbox{(\@tempa)}$$\@ignoretrue\let\theequation\@tempb}
\def\given{\hskip5\p@\relax\vrule\@width.4\p@\hskip5\p@\relax}
\newcommand{\open}[1]{%
\par\normalfont\topsep6\p@\@plus6\p@\trivlist\item[\hskip\labelsep\itshape#1%
\@addpunct{.}]\ignorespaces}
\DeclareRobustCommand{\close}[1]{%
  \ifmmode 
  \else \leavevmode\unskip\penalty9999 \hbox{}\nobreak\hfill
  \fi
  \quad\hbox{$#1$}}
\newlength{\toskip}\settowidth{\toskip}{(\theequation)}
\def\<{\langle}
\def\>{\rangle}
\def \R {{\mathbb R}}
\def \E {{\mathbb E}}
\def \L {{\mathbb L}}
\def \Ent {\textrm{Ent}}
\begin{document}
\date{\today}

\title[Functional Inequalities via Lyapunov conditions]{Functional Inequalities via Lyapunov conditions}

 \author[P. Cattiaux]{\textbf{\quad {Patrick} Cattiaux $^{\spadesuit}$ \, \, }}
\address{{\bf {Patrick} CATTIAUX},\\ Institut de Math\'ematiques de Toulouse. CNRS UMR 5219. \\
Universit\'e Paul Sabatier,
\\ 118 route
de Narbonne, F-31062 Toulouse cedex 09.} \email{cattiaux@math.univ-toulouse.fr}

 \author[A. Guillin]{\textbf{\quad {Arnaud} Guillin $^{\diamondsuit}$}}
\address{{\bf {Arnaud} GUILLIN},\\ Laboratoire de Math\'ematiques, CNRS UMR 6620, Universit\'e Blaise Pascal, avenue des Landais 63177 Aubi\`ere.}
\email{guillin@math.univ-bpclermont.fr}

\maketitle
 \begin{center}

 \textsc{$^{\spadesuit}$  Universit\'e de Toulouse}
\smallskip

\textsc{$^{\diamondsuit}$ Universit\'e Blaise Pascal}
\smallskip

 \end{center}

\begin{abstract}
We review here some recent results by the authors, and various coauthors, on (weak,super)
Poincar\'e inequalities, transportation-information inequalities or logarithmic Sobolev inequality
via a quite simple and efficient technique: Lyapunov conditions. \end{abstract}
\bigskip

\textit{ Key words :}   Lyapunov condition, Poincar\'e inequality, transportation information inequality, logarithmic Sobolev inequality.
\bigskip

\textit{ MSC 2000 : 26D10, 47D07, 60G10, 60J60.}
\bigskip

\section{Introduction and main concepts}

Lyapunov conditions appeared a long time ago.
They were particularly well fitted to deal with the problem of convergence
to equilibrium for Markov processes, see \cite{MT,MT2,MT3,DFG} and references therein.
They also appeared earlier in the study of large and moderate deviations for empirical functionals of Markov processes
(see for examples Donsker-Varadhan \cite{DV3,DV4}, Kontoyaniis-Meyn \cite{KM1,KM2}, Wu \cite{wu1,wu2}, Guillin \cite{G1,G2},...),
for solving Poisson equation \cite{GM},...\\

Their use to obtain functional inequalities is however quite recent, even if one may afterwards
find hint of such an approach in Deuschel-Stroock  \cite{DS} or Kusuocka-Stroock \cite{KS85}. The
present authors and coauthors have developed a methodology that has been successful for various
inequalities: Lyapunov-Poincar\'e inequalities \cite{BCG}, Poincar\'e inequalities \cite{BBCG},
transportation inequalities for Kullback information \cite{CGW} or Fisher information \cite{GLWY},
Super Poincar\'e inequalities \cite{CGWW}, weighted and weak Poincar\'e inequalities \cite{CGGR}
or the forthcoming \cite{CGW2} for Super weighted Poincar\'e inequalities. We finally refer to the
forthcoming book \cite{CGbook} for a complete review. For more references on the various
inequalities introduced here we refer to \cite{bakry,logsob,ledoux01,Wbook}. The goal of this
short review is to explain the methodology used in these papers and to present various general
sets of conditions for this panel of functional inequalities. The proofs will of course be only
schemed and we will refer to the original papers for complete statements.

Let us first describe the framework of our study. Let $E$ be some Polish state space, $\mu$ a
probability measure, and a $\mu$-symmetric operator $L$.  The main assumption on $L$ is that there
exists some algebra $\mathcal{A}$ of bounded functions, containing constant functions, which is
everywhere dense (in the $\L_2(\mu)$ norm) in the domain of $L$. It enables us to define a
``carr\'e du champ'' $\Gamma$, {\it i.e.} for $f, g \in \mathcal{A}$, $L(fg)=f Lg + g Lf + 2
\Gamma(f,g)$. We will also suppose that $\Gamma$ is a derivation (in each component), {\it i.e.}
$\Gamma(fg,h)=f\Gamma(g,h) + g \Gamma(f,h)$, i.e. we are in the standard ``diffusion'' case in
\cite{bakry} and we refer to the introduction of \cite{cat4} for more details. For simplicity we
set $\Gamma(f)=\Gamma(f,f)$. Also, since $L$ is a diffusion, we have the following chain rule
formula $\Gamma(\Psi(f), \Phi(g))=\Psi'(f) \Phi'(g) \Gamma(f,g)$.

In particular if $E=\R^n$, $\mu(dx)= e^{-V(x)} dx$, where $V$ is smooth and $L=\Delta - \nabla
V.\nabla$, we may consider the $C^\infty$ functions with compact support (plus the constant
functions) as the interesting subalgebra $\mathcal{A}$, and then $\Gamma(f,g)=\nabla f \cdot
\nabla g$. It will be our main object.
\medskip

Now we define the notion of $\phi$-Lyapunov function. Let $W\geq 1$ be a smooth enough function on
$E$ and $\phi$ be a $\mathcal{C}^1$ positive increasing function defined on $\R^+$. We say that
$W$ is a $\phi$-Lyapunov function if there is an increasing family of exhausting sets
$(A_r)_{r\ge0} \subset E$  and some $b \geq 0$ such that for some $r_0>0$
\begin{equation}
LW \, \le \,  -\phi(W) \, + \, b \, \BBone_{A_{r_0}} \, .\label{lyap}
\end{equation}
 One has very different behavior depending on $\phi$: if $\phi$ is linear then results of \cite{MT2,MT3} asserts
that the associated semigroup converges to equilibrium with exponential speed (in total variation or with some weighted norm)
 so that it is legitimate to hope for a Poincar\'e inequality to be valid.

 When $\phi$ is superlinear
 (or more generally in the form $\phi\times W$ where $\phi$ tends to infinity )
 we may hope for stronger inequalities (Super Poincar\'e, ultracontractivity...).

 Finally if $\phi$ is sublinear, as asserted in \cite{DFG}, only subexponential convergence to equilibrium is valid,
 so we should be in the regime of weak Poincar\'e inequalities.   We will see class of examples in the next section.\\
 Note however that the result of \cite{MT2,MT3,DFG} are valid even in a fully degenerate hypoelliptic setting
 (kinetic Fokker-Planck equation for example (see \cite{wu1,BCG})) whereas we cannot hope for a (weak, normal or super)
 Poincar\'e inequality, due to the "degeneracy" of the Dirichlet form.

 We thus have to impose another condition which will often be a "local inequality" such as local Poincar\'e inequality
 (i.e. Poincar\'e inequality restricted to a ball, or a particular set) or local super Poincar\'e inequality,
 preventing degeneracy case but quite easy to verify for general locally bounded measures. \\

We will present now the main lemma which will show how the Lyapunov condition is used in our setting.

\begin{lemma} \label{lem:philyap}
 Let $\psi : \mathbb{R}^+ \to \mathbb{R}^+$ be a $\mathcal{C}^1$ increasing function.
 Then, for any $f\in \mathcal{A}$ and any positive $h\in D(\mathcal E)$,
$$
\int  \, \frac{-Lh}{\psi(h)} \, f^2 \, d\mu  \leq  \int \frac { \, \Gamma(f)}{ \psi'(h)} d\mu.
$$
In particular,
$$
\int  \, \frac{-Lh}{h} \, f^2 \, d\mu  \leq  \int \, \Gamma(f) d\mu.
$$
\end{lemma}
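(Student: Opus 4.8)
The plan is to integrate by parts using the $\mu$-symmetry of $L$ and then apply the chain rule for the carré du champ. First I would write, for $f\in\mathcal A$ and positive $h\in D(\mathcal E)$,
$$
\int \frac{-Lh}{\psi(h)}\, f^2\, d\mu = \int \Gamma\!\left(h,\ \frac{f^2}{\psi(h)}\right) d\mu,
$$
which is just the definition of the Dirichlet form $\mathcal E(h,g)=\int \Gamma(h,g)\,d\mu = \int (-Lh)\,g\,d\mu$ applied to $g = f^2/\psi(h)$ (one should check $g$ is an admissible test function, or argue by density, which is where a little care is needed). Then I would expand the right-hand side using that $\Gamma$ is a derivation in its second argument:
$$
\Gamma\!\left(h,\ \frac{f^2}{\psi(h)}\right)
= \frac{1}{\psi(h)}\,\Gamma(h,f^2) + f^2\,\Gamma\!\left(h,\ \frac{1}{\psi(h)}\right).
$$

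Next I would compute each piece via the diffusion chain rule $\Gamma(\Psi(f),\Phi(g))=\Psi'(f)\Phi'(g)\Gamma(f,g)$: the first term gives $\tfrac{2f}{\psi(h)}\,\Gamma(h,f)$, and the second gives $f^2\cdot\bigl(-\psi'(h)/\psi(h)^2\bigr)\,\Gamma(h)$ since $\Gamma(h,1/\psi(h)) = -\psi'(h)\psi(h)^{-2}\Gamma(h)$ and $\psi$ is increasing so $\psi'\ge 0$. Thus
$$
\int \frac{-Lh}{\psi(h)}\, f^2\, d\mu = \int \left( \frac{2f\,\Gamma(h,f)}{\psi(h)} - \frac{f^2\,\psi'(h)\,\Gamma(h)}{\psi(h)^2}\right) d\mu.
$$
The final step is a pointwise Cauchy–Schwarz / completion-of-the-square estimate: for the integrand one bounds
$$
\frac{2f\,\Gamma(h,f)}{\psi(h)} - \frac{f^2\,\psi'(h)\,\Gamma(h)}{\psi(h)^2}
\ \le\ \frac{\Gamma(f)}{\psi'(h)},
$$
which follows from the inequality $2ab - a^2 \le b^2$ applied with $a = f\sqrt{\psi'(h)}\,/\psi(h)$ and the Cauchy–Schwarz bound $\Gamma(h,f)\le \Gamma(h)^{1/2}\Gamma(f)^{1/2}$ for the carré du champ; indeed then $2f\Gamma(h,f)/\psi(h) \le 2\,(f\sqrt{\psi'(h)}/\psi(h))\cdot(\Gamma(h)^{1/2})\cdot(\Gamma(f)^{1/2}/\sqrt{\psi'(h)})$, and one absorbs the $\Gamma(h)$ term. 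Integrating this pointwise bound yields the claim. The special case follows by taking $\psi(t)=t$, so $\psi'\equiv 1$.

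The main obstacle I anticipate is not the algebra but the justification of the integration by parts when $h\in D(\mathcal E)$ only and $f\in\mathcal A$: one must check that $f^2/\psi(h)$ is a legitimate element of $D(\mathcal E)$ (or of the form domain against which $\mathcal E(h,\cdot)$ extends), and that all the integrals appearing are finite so that the manipulations and the final integration of the pointwise inequality are valid. This is typically handled by first proving the identity for $h$ bounded away from $0$ and bounded above (so $\psi(h),\psi'(h)$ are bounded), with $f\in\mathcal A$, and then passing to the limit by truncation and density, monotone/dominated convergence controlling the right-hand side $\int \Gamma(f)/\psi'(h)\,d\mu$.
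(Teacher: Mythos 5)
Your proof is correct and follows essentially the same route as the paper: symmetry of $L$ to pass to $\int\Gamma(h,f^2/\psi(h))\,d\mu$, expansion via the derivation/chain-rule properties, and then the Cauchy--Schwarz plus completion-of-the-square bound $2ab-a^2\le b^2$ to absorb the $\Gamma(h)$ term. Your additional remarks on justifying the integration by parts for $h\in D(\mathcal E)$ address a point the paper leaves implicit, but the core argument is identical.
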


\begin{proof}
Since $L$ is $\mu$-symmetric, using that $\Gamma$ is a derivation and the chain rule formula, we
have
\begin{eqnarray*}
\int  \, \frac{-Lh}{\psi(h)} \, f^2 \, d\mu
& = & \int \, \Gamma \left( h, \frac{f^2}{\psi(h)} \right) \, d\mu
=
\int \left(\frac{2 \, f \,
\Gamma(f,h)}{\psi(h)} \, - \, \frac{f^2 \psi'(h) \Gamma(h)}{\psi^2(h)}\right) d\mu \, .
\end{eqnarray*}
Since $\psi$ is increasing and according to Cauchy-Schwarz inequality we get
\begin{eqnarray*}
\frac{f \, \Gamma(f,h)}{\psi(h)}
& \leq &
\frac{f \sqrt{\Gamma(f) \Gamma(h)}}{\psi(h)}
 =
\frac{ \sqrt{\Gamma(f)} }{ \sqrt{\psi'(h)} } \cdot
\frac{f \sqrt{\psi'(h) \Gamma(h)}}{\psi(h)}
\\
&\leq &
\frac{1}{2} \frac{\Gamma(f)}{\psi'(h)} + \frac{1}{2} \, \frac{f^2 \psi'(h)\,
\Gamma(h)}{\psi^2(h)}.
\end{eqnarray*}
The result follows.
\end{proof}

\begin{remark}
In fact the conclusion of the preceding Lemma, in the case where $\psi$ is the identity, holds in
a more general setting and requires only the reversibility assumption. It is thus valid for some
Markov jump case ($M/M/\infty$, Levy process,...), see \cite{GLWY} where the proof follows from a
large deviations argument, or on more general Riemanian manifolds.
\end{remark}

\section{Examples of Lyapunov conditions}
Before stating the results achievable by our method, let us present some examples of Lyapunov conditions.

We will restrict ourselves to the framework described before: $E=\R^n$, $\mu(dx)=Z e^{-V(x)} dx$
and $L=\Delta - \nabla V.\nabla$. The Lyapunov conditions may be quite different: first
because of the very nature of $V$ itself, secondly because of the choice of the Lyapunov function
$W$. Let us illustrate it in the Gaussian case $V(x)=|x|^2$
\begin{enumerate}
\item
Choose first $W_1(x)=1+|x|^2$, so that
$$LW_1(x)=\Delta W_1(x)-2x.\nabla W_1(x)=2n-2|x|^2\le -W_1(x)+(2n+1)1_{|x|^2\le 2n+1}.$$
\item Choose now $W_2(x)=e^{a|x|^2}$ for $0<a<1$,
$$LW_2(x)=(2n+4a(a-1)|x|^2)W_2(x)\le -\lambda |x|^2 W_2(x)+b1_{|x|^2\le R}$$
for some $\lambda,b,R$.
\end{enumerate}
We may now consider usual examples: let $U>c>0$ be convex (convexity and positivity outside a
large compact is sufficient if the measure is properly defined)
\begin{itemize}
\item Exponential type measures: $V(x)=U^p$ for some positive $p$.
Then,
there exists $b, c, R>0$ and $W \geq 1$ such that
$$
LW \leq - \phi(W) + b {1}_{B(0,R)}
$$
with $\phi(u)= u \, \log^{2(p-1)/p}(c+u)$ increasing. Furthermore, one can choose
$W(x)=e^{\gamma |x|^p}$ for $x$ large and $\gamma$ small enough.

\item Cauchy type measures: $V(x)=(n+\beta)\log(U)$ for some positive $\beta$.
Then,
there exists $k>2$,  $b, R>0$ and $W \geq 1$ such that
$$
LW \leq - \phi(W) + b {1}_{B(0,R)}
$$
with $\phi(u)=c u^{(k-2)/k}$ for some constant $c>0$.  Furthermore, one can choose
$W(x)=|x|^k$ for $x$ large. $k $ has to be choosen so that there exists $\epsilon>0$ such that $k+n\epsilon-2-\beta(1-\epsilon)<0$.
\end{itemize}
The details can be found in \cite{CGGR} for example.

\section{Poincar\'e's like inequalities}

The prototype of inequalities we will consider in this section is the following Poincar\'e
inequality: for every nice function $f$ there exists $C>0$ such that
$$Var_\mu(f):=\int f^2d\mu-\left(\int fd\mu\right)^2\le C\int|\nabla f|^2d\mu.$$
Poincar\'e inequalities have attracted a lot of attention due to their beautiful properties: they
are equivalent to the exponential $L_2$ decay of the associated semigroup, they give exponential
dimension free concentration of measure,... We refer to \cite{logsob,ledoux01} for historical and
mathematical references. Weak and Super Poincar\'e inequalities will be variant (weaker or
stronger) of this inequality. As we will see, this inequality may be proved very quickly using
Lyapunov conditions and local inequalities.

\subsection{Poincar\'e inequality}
Let us begin by
\begin{theorem}
Suppose that the following Lyapunov condition holds: there exists $W\ge 1$ in the domain of $L$,
$\lambda>0$, $b>0$, $R>0$ such that
\begin{equation}\label{Lpoinc}
LW\le -\lambda W+b1_{\{|x|\le R\}} \, .
\end{equation}
Assume in addition that the following local Poincar\'e inequality holds: there exists $\kappa_R$
such that for all nice functions $f$
\begin{equation}\label{localpoinc}
\int_{|x|\le R} f^2d\mu\le \kappa_R\int\Gamma(f)d\mu+\mu(\{|x|\le R\})^{-1}\left(\int_{|x|\le R} fd\mu\right)^2.
\end{equation}
Then we have the following Poincar\'e inequality: for all nice $f$
$$Var_\mu(f)\le \frac{b\kappa_R+1}{\lambda}\int\Gamma(f)d\mu.$$
\end{theorem}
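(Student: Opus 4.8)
The plan is to combine the Lyapunov inequality \eqref{Lpoinc} with Lemma~\ref{lem:philyap} (applied with $\psi$ the identity) and then absorb the resulting local term via the local Poincar\'e inequality \eqref{localpoinc}. First I would take $h=W$ and $\psi=\mathrm{id}$ in Lemma~\ref{lem:philyap}, which gives, for every $f\in\mathcal{A}$,
$$\int \frac{-LW}{W}\, f^2\, d\mu \le \int \Gamma(f)\, d\mu.$$
Then I would use \eqref{Lpoinc}: since $-LW \ge \lambda W - b\,1_{\{|x|\le R\}}$ and $W\ge 1$, we have $\frac{-LW}{W} \ge \lambda - \frac{b}{W}1_{\{|x|\le R\}} \ge \lambda - b\,1_{\{|x|\le R\}}$, so that
$$\lambda \int f^2\, d\mu \le \int \Gamma(f)\, d\mu + b\int_{|x|\le R} f^2\, d\mu.$$
This is the heart of the argument; everything else is bookkeeping.

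Next I would insert the local Poincar\'e inequality \eqref{localpoinc} to control $\int_{|x|\le R} f^2\,d\mu$, obtaining
$$\lambda \int f^2\, d\mu \le (1+b\kappa_R)\int \Gamma(f)\, d\mu + \frac{b}{\mu(\{|x|\le R\})}\left(\int_{|x|\le R} f\, d\mu\right)^2.$$
Since the claimed bound is on $\Var_\mu(f)$, which is translation-invariant in $f$, I would apply the inequality to $f - m$ for a suitable constant $m$ and choose $m$ to kill the last term; the natural choice is $m=\frac{1}{\mu(\{|x|\le R\})}\int_{|x|\le R} f\, d\mu$, the average of $f$ over the ball, so that $\int_{|x|\le R}(f-m)\,d\mu=0$. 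Note $\Gamma(f-m)=\Gamma(f)$ because $\Gamma$ is a derivation and annihilates constants. This yields $\lambda\int (f-m)^2\, d\mu \le (1+b\kappa_R)\int\Gamma(f)\, d\mu$, and finally $\Var_\mu(f)=\inf_c \int (f-c)^2\, d\mu \le \int (f-m)^2\, d\mu$ gives the stated conclusion with constant $(b\kappa_R+1)/\lambda$.

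The only genuinely delicate point is that Lemma~\ref{lem:philyap} is stated for $f\in\mathcal{A}$ and positive $h\in D(\mathcal{E})$, whereas here we want $h=W$ (which is assumed in the domain of $L$ and $\ge 1>0$, so this is fine) and we want the conclusion for all ``nice'' $f$ rather than just $f\in\mathcal{A}$ — this is handled by the density of $\mathcal{A}$ and a routine approximation, and the replacement of $f$ by $f-m$ stays within the class of nice functions. I would also remark that $W\ge 1$ is used twice: once to pass from $\frac{-LW}{W}$-weighting to an unweighted $f^2$ integral, and once to bound $\frac{b}{W}\le b$ on the ball; no integrability of $W$ itself is needed. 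I do not expect any real obstacle beyond these standard approximation remarks, so in the write-up I would simply apply Lemma~\ref{lem:philyap}, invoke \eqref{Lpoinc} and \eqref{localpoinc}, optimize over the additive constant, and conclude.
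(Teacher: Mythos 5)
Your proof is correct and follows essentially the same route as the paper: apply Lemma~\ref{lem:philyap} with $\psi=\mathrm{id}$ and $h=W$, rewrite \eqref{Lpoinc} as a lower bound on $-LW/(\lambda W)$, absorb the ball term via \eqref{localpoinc}, and center $f$ at its normalized average over the ball so the mean term vanishes (the paper does the centering at the outset rather than at the end, which is an immaterial reordering). Your explicit choice $m=\mu(\{|x|\le R\})^{-1}\int_{|x|\le R}f\,d\mu$ is in fact the correct normalization needed for the last term of \eqref{localpoinc} to drop out.
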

As the proof is very simple, it will be quite the only one we will write completely:
\begin{proof}
Denote $c_R=\int_{|x|\le R} fd\mu$. Remark now that we may rewrite the Lyapunov condition as
$$1\le -\frac{LW}{\lambda W}+\frac{b}{\lambda}1_{\{|x|\le R\}}$$
so that by Lemma \ref{lem:philyap} and the local Poincar\'e inequality, we have
\begin{eqnarray*}
Var_{\mu}(f)&\le&\int (f-c_R)^2d\mu\\
&\le&\int (f-c_R)^2\frac{-LW}{\lambda W}d\mu+\frac{b}{\lambda}\int_{|x|\le R}(f-c_R)^2d\mu\\
&\le&\frac1\lambda\int\Gamma(f)d\mu+\frac{b}{\lambda}\kappa_R\int|\nabla f|^2d\mu
\end{eqnarray*}
which is the desired result.
\end{proof}
\begin{remark}
Using this theorem combined with the examples provided above, we may recover very simply the nice results of Bobkov \cite{bob99}
asserting that every log-concave measures (i.e. $V$ convex) satisfies a Poincar\'e inequality (\cite{BBCG}).\\
One may also easily verifies that the following two sufficient conditions for the Poincar\'e
inequality, are inherited from Lyapunov condition:
\begin{enumerate}
\item There exist $0<a<1$, $c>0$ and $R>0$ such that for all $|x|\ge R$, we have $(1-a)|\nabla
V|^2-\Delta V\ge c.$ \item There exist $c>0$ and $R>0$ such that for all $|x|\ge R$, we have
$x.\nabla V(x)\ge c|x|$.
\end{enumerate}
Note that the first one was known with $a=1/2$ for a long time but with quite harder proof.
\end{remark}

\begin{remark}
We will not develop it here but in fact quite the same may be done for Cheeger inequality ($L_1$-Poincar\'e), see \cite{BBCG}.
\end{remark}

\subsection{Weighted and weak Poincar\'e inequality}
We will now consider weaker inequalities: weighted  Poincar\'e inequalities as introduced recently
by Bobkov-Ledoux \cite{BLweight} or \cite{CGGR}, i.e. with an additional weight in the Dirichlet
form or in the variance, or weak Poincar\'e inequalities introduced by R\"ockner-Wang \cite{RW}
(see also \cite{BCR2} or \cite{CGGR}), useful to establish sub exponential concentration
inequalities or algebraic rate of decay to equilibrium for the associated Markov process. We shall
state here
\begin{theorem}
Suppose that the following $\phi$-Lyapunov condition holds: there exist some sublinear
$\phi:[1,\infty[\to\R^+$ and $W\ge 1$, $b>0$, $R>0$ such that
\begin{equation}\label{philyap}
LW\le -\phi(W)+b1_{\{|x|\le R\}}.
\end{equation}
Suppose also that $\mu$ satisfies a local Poincar\'e inequality (\ref{localpoinc}) then \begin{enumerate}
\item for all nice $f$, the following weighted Poincar\'e inequality holds
$$Var_\mu(f)\le \max\left(\frac{b\kappa_R}{\phi(1)},1\right)\int\left(1+\frac1{\phi'(W)}\right)\Gamma(f)d\mu;$$
\item for all nice $f$, the following converse weighted Poincar\'e inequality holds
$$\inf_c\int(f-c)^2\frac{\phi(W)}{W}d\mu\le (1+b\kappa_R)\int\Gamma(f)d\mu;$$
\item define $F(u)=\mu(\phi(W)<uW)$ and for $s<1$, $F^{-1}(s):=\inf\{u;F(u)>s\}$ then the following weak Poincar\'e inequality holds:
$$Var_\mu(f)\le \frac {C}{F^{-1}(s)}\int\Gamma(f)d\mu+s\, Osc_\mu(f)^2.$$
\end{enumerate}
\end{theorem}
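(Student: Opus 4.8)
The plan is to handle the three assertions in sequence, each time starting from the same rewriting of the $\phi$-Lyapunov condition that was used in the Poincar\'e case, namely $1 \le -LW/\phi(W) + (b/\phi(1))\BBone_{\{|x|\le R\}}$, which is legitimate since $\phi$ is increasing so $\phi(W)\ge\phi(1)>0$ on $\{|x|\le R\}$ (where $W\ge 1$). For part (1), I would apply Lemma \ref{lem:philyap} with $\psi=\phi$ to the function $f-c_R$ where $c_R=\int_{|x|\le R}fd\mu$: writing $\Var_\mu(f)\le\int(f-c_R)^2d\mu$, splitting according to the rewritten Lyapunov inequality gives $\int(f-c_R)^2(-LW/\phi(W))d\mu \le \int \Gamma(f)/\phi'(W)\,d\mu$ from the Lemma, plus $(b/\phi(1))\int_{|x|\le R}(f-c_R)^2d\mu$, and the latter is controlled by $(b\kappa_R/\phi(1))\int\Gamma(f)d\mu$ via the local Poincar\'e inequality \eqref{localpoinc} exactly as before. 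Collecting the two pieces and bounding each constant by $\max(b\kappa_R/\phi(1),1)$, while noting $\Gamma(f)/\phi'(W)\le (1+1/\phi'(W))\Gamma(f)$ and $\Gamma(f)\le(1+1/\phi'(W))\Gamma(f)$, yields the stated weighted Poincar\'e inequality.

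For part (2), the converse weighted inequality, I would instead invoke the second (simpler) conclusion of Lemma \ref{lem:philyap}, the one with $\psi=\mathrm{id}$: $\int (-Lh/h)f^2d\mu\le\int\Gamma(f)d\mu$. Taking $h=W$ and $f$ replaced by $f-c_R$, this reads $\int (f-c_R)^2(-LW/W)d\mu\le\int\Gamma(f)d\mu$. Now the Lyapunov condition directly gives $-LW/W \ge \phi(W)/W - (b/W)\BBone_{\{|x|\le R\}}\ge \phi(W)/W - b\BBone_{\{|x|\le R\}}$ since $W\ge1$, so $\int(f-c_R)^2\phi(W)/W\,d\mu\le\int\Gamma(f)d\mu+b\int_{|x|\le R}(f-c_R)^2d\mu$, and again the local Poincar\'e inequality \eqref{localpoinc} bounds the last term by $b\kappa_R\int\Gamma(f)d\mu$. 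Since $c_R$ is a valid (if not optimal) choice of $c$, taking the infimum over $c$ only decreases the left-hand side, giving $\inf_c\int(f-c)^2\phi(W)/W\,d\mu\le(1+b\kappa_R)\int\Gamma(f)d\mu$.

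For part (3), I would derive the weak Poincar\'e inequality from part (2) by a truncation/level-set argument on the weight $\phi(W)/W$. Fix $s<1$ and set $u=F^{-1}(s)$, so that $\mu(\phi(W)<uW)\le s$ (using right-continuity of $F$), i.e. the set $\{\phi(W)/W < u\}$ has $\mu$-measure at most $s$. On its complement $\phi(W)/W\ge u$, hence for any constant $c$, $\int_{\{\phi(W)\ge uW\}}(f-c)^2d\mu \le \frac1u\int(f-c)^2\phi(W)/W\,d\mu\le \frac{1+b\kappa_R}{u}\int\Gamma(f)d\mu$ by part (2). On the small set $\{\phi(W)<uW\}$ one bounds $(f-c)^2\le \Osc_\mu(f)^2$ pointwise (choosing $c$ to be, say, the mid-range value, or more simply using $\inf_c\int_A(f-c)^2d\mu\le\mu(A)\,\Osc_\mu(f)^2$), contributing at most $s\,\Osc_\mu(f)^2$. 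Adding the two contributions and using $\Var_\mu(f)\le\inf_c\int(f-c)^2d\mu$ gives $\Var_\mu(f)\le \frac{C}{F^{-1}(s)}\int\Gamma(f)d\mu + s\,\Osc_\mu(f)^2$ with $C=1+b\kappa_R$.

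The routine parts are the Cauchy--Schwarz-type bounds and the bookkeeping of constants; the only genuinely delicate point is the measurability/right-continuity handling in part (3) — one must be careful that $F^{-1}(s)=\inf\{u:F(u)>s\}$ together with monotonicity of $F$ indeed forces $\mu(\phi(W)/W < F^{-1}(s))\le s$, so that the split into a ``good'' set of weight $\ge F^{-1}(s)$ and a ``bad'' set of measure $\le s$ is valid; a limiting argument (applying the estimate at $u=F^{-1}(s)+\epsilon$ and letting $\epsilon\downarrow0$) takes care of any edge case. I would also note that part (3) as stated absorbs constants into $C$, so no sharpness in the constant is being claimed.
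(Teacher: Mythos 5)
Your proof follows exactly the route the paper sketches: parts (1) and (2) mimic the Poincar\'e argument via Lemma \ref{lem:philyap} (with $\psi=\phi$ and $\psi=\mathrm{id}$ respectively), and part (3) splits the variance according to the level sets of $\phi(W)/W$ and invokes the converse inequality of part (2), which is precisely the decomposition the authors describe. The only point to tighten is that in part (3) you must use one and the same constant $c$ in both pieces of the split (e.g.\ the normalized local mean over $\{|x|\le R\}$, which lies in the range of $f$ so the pointwise oscillation bound applies, and which is the constant for which part (2) is actually established); with that small adjustment the argument is complete.
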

\begin{proof}
The proof of the first two points may be easily derived using the proof for the usual Poincar\'e
inequality. For the weak Poincar\'e inequality, start with the variance, divide the integral with
respect to large or small values of $\phi(W)/W$ and use the converse Poincar\'e inequality
established previously, see details in \cite{CGGR}.
\end{proof}
\begin{remark}
Using $V(x)=1+|x|^2$ in the examples of the previous section, one gets a weighted inequality with
weight $1+|x|^2$, and converse inequality with weight $(1+|x|^2)^{-1}$ recovering results of
Bobkov-Ledoux \cite{BLweight} (with worse constants however). Note also that it enables us to get
the correct order for the weak Poincar\'e inequality (as seen in dimension 1 in \cite{BCR2}). For
this weak Poincar\'e inequality, one can find another approach in \cite{BCG} based on weak
Lyapunov-Poincar\'e inequality.
\end{remark}

\subsection{Super Poincar\'e inequality}
Our next inequality has been considered first by Wang \cite{w00} to study the essential spectrum
of Markov operators. It is also useful for concentration of measures \cite{w00} or isoperimetric
inequalities \cite{BCR3}. Wang also showed that they are, under Poincar\'e inequalities,
equivalent to $F$-Sobolev inequality (in particular one specific Super Poincar\'e inequality is
equivalent to the logarithmic Sobolev inequality), so that the results we will present now enables
us to consider a very large class of inequalities stronger than Poincar\'e inequality.
\begin{theorem}
Suppose that there is an increasing family of exhausting sets $(A_r)_{r\ge 0}$, an $r_0>0$ and a
superlinear $\phi$ such that for some $b>0$ the following Lyapunov condition holds
\begin{equation}\label{suplyap}
LW\le-\phi(W)+b1_{A_{r_0}} \, .
\end{equation}
Assume in addition that a local Super Poincar\'e inequality holds, i.e. there exists $\beta_{loc}$
decreasing in $s$ (for all $r$) such that for all $s$ and nice $f$
\begin{equation}
\label{locsup} \int_{A_r}f^2d\mu\le
s\int\Gamma(f)d\mu+\beta_{loc}(r,s)\left(\int_{A_r}|f|d\mu\right)^2 \, .
\end{equation}
Then, if $G(r):=(\inf_{A_r^c}\phi(W)/W)^{-1})$ tends to 0 as $r\to\infty$, $\mu$ satisfies for all
positive $s$
\begin{equation}\label{suppoinc}
\int f^2d\mu\le 2s\int\Gamma(f)d\mu+\tilde\beta(s)\left(\int |f|d\mu\right)^2
\end{equation}
where
$$\tilde\beta(s)=c_{r0}\beta_{loc}(G^{-1}(s),s/c_{r0})$$
and $c_{r_0}=1+b\frac{\sup_{A_{r_0}} W}{\inf_{A_{r_0}^c }\phi(W)/W}.$
\end{theorem}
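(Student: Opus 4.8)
The plan is to imitate the short proof of the ordinary Poincar\'e inequality given above: split $\int f^2\,d\mu$ according to $A_r$ and its complement $A_r^c$, estimate the $A_r^c$-part via Lemma~\ref{lem:philyap}, estimate the $A_r$-part via the local super Poincar\'e inequality \eqref{locsup}, and finally tune $r$ as a function of $s$.

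Concretely, I would first rewrite the Lyapunov bound \eqref{suplyap} as $\phi(W)\le -LW+b\,\BBone_{A_{r_0}}$ and divide by $W\ge 1$ to obtain $\phi(W)/W\le(-LW)/W+b\,\BBone_{A_{r_0}}$. Fixing $r\ge r_0$, on $A_r^c$ one has $W/\phi(W)\le\big(\inf_{A_r^c}\phi(W)/W\big)^{-1}=G(r)$, hence
$$\int_{A_r^c}f^2\,d\mu\le G(r)\int_{A_r^c}f^2\,\frac{\phi(W)}{W}\,d\mu\le G(r)\left(\int f^2\,\frac{-LW}{W}\,d\mu+b\int_{A_{r_0}}f^2\,d\mu\right),$$
the second inequality using $\phi(W)/W\ge 0$ to pass to the full space. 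The second form of Lemma~\ref{lem:philyap} (with $h=W$) bounds $\int f^2\,(-LW)/W\,d\mu$ by $\int\Gamma(f)\,d\mu$, and since $A_{r_0}\subseteq A_r$ we may replace $\int_{A_{r_0}}f^2\,d\mu$ by $\int_{A_r}f^2\,d\mu$. Adding back $\int_{A_r}f^2\,d\mu$ gives
$$\int f^2\,d\mu\le\big(1+bG(r)\big)\int_{A_r}f^2\,d\mu+G(r)\int\Gamma(f)\,d\mu.$$

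Next I would apply \eqref{locsup} on $A_r$ with the specific parameter $\sigma=s/c_{r_0}$ and bound $\big(\int_{A_r}|f|\,d\mu\big)^2\le\big(\int|f|\,d\mu\big)^2$, which produces the coefficient $(1+bG(r))\sigma+G(r)$ on $\int\Gamma(f)\,d\mu$ and $(1+bG(r))\beta_{loc}(r,\sigma)$ on $\big(\int|f|\,d\mu\big)^2$. Choosing $r=G^{-1}(s)$ makes $G(r)\le s$; and since $G$ is decreasing with $G(r_0)=\big(\inf_{A_{r_0}^c}\phi(W)/W\big)^{-1}$ and $W\ge 1$, one has $1+bG(r)\le 1+bG(r_0)\le 1+b\,\sup_{A_{r_0}}W\cdot G(r_0)=c_{r_0}$. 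Hence $(1+bG(r))\sigma\le c_{r_0}(s/c_{r_0})=s$, so the Dirichlet coefficient is at most $s+s=2s$ and the $\big(\int|f|\,d\mu\big)^2$-coefficient is at most $c_{r_0}\,\beta_{loc}(G^{-1}(s),s/c_{r_0})=\tilde\beta(s)$, which is exactly \eqref{suppoinc}.

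The argument is essentially bookkeeping once Lemma~\ref{lem:philyap} is available — the genuine ideas (chain rule plus Cauchy--Schwarz) are already spent there. The one spot that deserves attention is the handling of the localized excess $b\,\BBone_{A_{r_0}}$: folding it into the local super Poincar\'e contribution is what forces the restriction $r\ge r_0$, brings in the constant $c_{r_0}$, and explains why the conclusion carries ``$2s$'' rather than ``$s$'' in front of the Dirichlet form. The choice $r=G^{-1}(s)$ satisfies $r\ge r_0$ exactly for $s$ below $G(r_0)$, which is the range of interest (the behaviour of $\tilde\beta$ near $0$ is what one uses in practice); for larger $s$ one replaces $A_r$ by $A_{r_0}$ and runs the same computation using $G(r_0)\le s$, and such large-$s$ instances are in any case weak.
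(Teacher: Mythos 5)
Your proof is correct and follows essentially the same route as the paper's (very compressed) argument: split over $A_r$ and $A_r^c$, control the exterior part by $G(r)\int f^2\phi(W)/W\,d\mu$ and then by Lemma~\ref{lem:philyap} applied with $h=W$, absorb the $b\,\BBone_{A_{r_0}}$ excess into the local term, apply \eqref{locsup} with parameter $s/c_{r_0}$, and choose $r=G^{-1}(s)$. The bookkeeping reproduces the stated constants ($2s$ and $\tilde\beta(s)=c_{r_0}\beta_{loc}(G^{-1}(s),s/c_{r_0})$, using $1+bG(r_0)\le c_{r_0}$ since $W\ge1$), and your remark about the range $s\le G(r_0)$ versus large $s$ is a fair observation about a detail the paper leaves implicit.
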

\begin{proof}
In fact, one just has to play with the extra strength provided by the Lyapunov condition (i.e.
superlinear) and set $A_r$, i.e. $$\int f^2d\mu \le \int_{A_r}f^2d\mu+\int_{A_r^c}f^2d\mu\le
\int_{A_r}f^2d\mu+\frac1{\inf_{A_r^c}\phi(W)/W}\int f^2\frac{\phi(W)}{W}d\mu.$$ The first term is
treated by using the local inequality and for the second one, use the Lyapunov condition, the
crucial Lemma \ref{lem:philyap}, and once again the local Super Poincar/'e inequality. Optimize in
$r$ to get the conclusion, see details in \cite{CGWW}.
\end{proof}
\begin{remark}
Note that if the Boltzmann measure $\mu$ is locally bounded, using Nash inequality for Lebesgue
measures on balls, it is quite easy to find a local Super Poincar\'e inequality, see discussion in
\cite{CGWW}.
\end{remark}
\begin{remark}
Using this approach, one may recover famous criteria for logarithmic Sobolev inequality: convexity
Bakry-Emery criterion \cite{bakry-emery} (with worse constants), Kusuocka-Stroock conditions
\cite{KS85}, or pointwise Wang's criterion, see \cite{Wbook}.
\end{remark}

\section{Transportation's inequalities}
We will consider here another type of inequalities linking Wasserstein distance  to various
information form, namely Kullback information or Fisher information defined respectively by: if
$f$ is a density of probability with respect to $\mu$
$$H(fd\mu,d\mu):=\Ent_{\mu}(f):=\int f\log(f)d\mu$$
$$I(fd\mu,d\mu):=\int\frac{|\nabla f|^2}{f}d\mu.$$
The Wasserstein distance is defined by: for all measure $\nu$ and $\mu$
$$W_p(\nu,\mu):=\inf\left\{\E(d^p(X,Y))^{1/p};\,X\sim\nu,Y\sim\mu\right\}.$$
\subsection{Transportation and Kullback information}
Firstly, let us consider the usual transportation inequalities: for all probability density $f$
wrt $\mu$
$$W_p(\nu,\mu)\le\sqrt{ c\,H(fd\mu,d\mu)}.$$
These types of inequalities were introduced by Marton \cite{Mar96} as they imply straightforwardly
concentration of measure, and deviation inequality by a beautiful
 characterization of Bobkov-Goetze \cite{bobkov-gotze}. The case $p=1$ was proved to be equivalent to Gaussian integrability
 \cite{DGW}.

The case $p=2$ is much  more difficult:  Talagrand established the inequality for Gaussian measure
\cite{Tal96}, whereas Otto-Villani \cite{OV} and Bobkov-Gentil-Ledoux \cite{BGL} proved that a
logarithmic Sobolev inequaliy is a sufficient condition. More recently (see \cite{CatGui1}), the
authors proved that the logarithmic Sobolev inequality is strictly stronger, and provided such an
example in dimension one. We will prove here that one may give a nice Lyapunov condition to verify
this transportation inequality. Let us finish by the beautiful characterization obtained by Gozlan
\cite{Gozlan} proving that the case $p=2$ is in fact equivalent to the Gaussian dimension free
concentration of measure, see \cite{Vbook} or \cite{GLsurvey} for more on the subject. We will
prove here

\begin{theorem}
Suppose that there exists $W\ge 1$, some point $X_0$ and constants $b,c$ such that
\begin{equation}\label{t2lyap}
LW\le (-cd^2(x,x_0)+b)W
\end{equation}
then there exists $C>0$ such that for all density $f$ w.r.t. $\mu$
$$W_2(fd\mu,\mu)\le\sqrt{K\,H(fd\mu,d\mu)}.$$
\end{theorem}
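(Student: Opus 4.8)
The plan is to deduce the quadratic transportation-cost inequality $W_2 \le \sqrt{K\, H}$ from the Lyapunov condition \eqref{t2lyap} by going through a well-known sufficient condition: it suffices to show that $\mu$ satisfies simultaneously a Poincar\'e inequality and a suitable integrability condition on the cost function $d^2(\cdot,x_0)$. Indeed, by the result of Bobkov-Gentil-Ledoux and the refinement of Gozlan (or equivalently via the perturbed/restricted log-Sobolev machinery of the authors), a probability measure satisfying a Poincar\'e inequality together with $\int e^{\delta d^2(x,x_0)}\,d\mu < \infty$ for some $\delta>0$ enjoys a $T_2$ inequality. So the proof reduces to extracting these two facts from \eqref{t2lyap}.

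\textbf{Step 1: Gaussian integrability.} First I would rewrite \eqref{t2lyap} as $LW/W \le -c\,d^2(x,x_0) + b$, i.e. $-LW/W \ge c\,d^2(x,x_0) - b$. Applying Lemma \ref{lem:philyap} with $\psi=\mathrm{id}$ to a well-chosen family of test functions $f$ (truncations of $e^{\theta d^2(x,x_0)/2}$ for small $\theta$, together with the chain rule $\Gamma(e^{\theta d^2/2}) = \theta^2 d^2 e^{\theta d^2} \Gamma(d)/4$ and $\Gamma(d)\le 1$) yields, after absorbing the $\Gamma(f)$ term into the left-hand side, a bound of the form $\int d^2(x,x_0)\, e^{\theta d^2(x,x_0)}\, d\mu \le C_\theta < \infty$ for $\theta$ small enough, hence $\int e^{\delta d^2(x,x_0)}\, d\mu < \infty$ for some $\delta>0$. (This is the standard way a Lyapunov function of the form $W=e^{\gamma d^2}$ produces Gaussian tails; indeed one can take $W$ itself comparable to such an exponential.)

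\textbf{Step 2: Poincar\'e inequality.} From \eqref{t2lyap} one also gets, on the complement of a large ball $B(x_0,R)$ (where $cd^2(x,x_0)-b \ge \lambda := 1$, say), the linear Lyapunov inequality $LW \le -\lambda W + b\,1_{B(x_0,R)}$ after enlarging $b$ to dominate $LW$ on the ball. Since $\mu = Ze^{-V}dx$ with $V$ locally bounded (smooth), $\mu$ satisfies a local Poincar\'e inequality \eqref{localpoinc} on $B(x_0,R)$ by the usual comparison with Lebesgue measure on balls. The Poincar\'e theorem of Section 3 then gives $\mathrm{Var}_\mu(f) \le C_P \int \Gamma(f)\,d\mu$.

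\textbf{Conclusion and main obstacle.} Combining Steps 1 and 2, $\mu$ satisfies a Poincar\'e inequality and has a finite square-exponential moment, so by the Bobkov-Gentil-Ledoux/Gozlan criterion it satisfies $W_2(f d\mu,\mu) \le \sqrt{K\, H(fd\mu,d\mu)}$ with $K$ depending only on $C_P$ and $\delta$; this is exactly the claimed inequality. The step I expect to be the main obstacle is Step 1 — making the truncation argument in Lemma \ref{lem:philyap} rigorous so that the $\Gamma(e^{\theta d^2/2})$ term can genuinely be absorbed (one needs $\theta$ small relative to $c$, and one must handle the cutoff near $\partial B(x_0,R)$ where the sign of $cd^2-b$ changes), together with checking that $d^2(\cdot,x_0)$ is regular enough (it is smooth away from the cut locus, which in $\R^n$ is empty) to apply the chain rule for $\Gamma$. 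The remaining ingredient — that Poincar\'e plus Gaussian integrability implies $T_2$ — is quoted from the literature (\cite{BGL,Gozlan,CGW}).
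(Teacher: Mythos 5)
Your Steps 1 and 2 are sound, and both facts do follow from \eqref{t2lyap}: applying Lemma \ref{lem:philyap} with $\psi=\mathrm{id}$ and $h=W$ gives $c\int d^2(x,x_0)g^2\,d\mu\le \int\Gamma(g)\,d\mu+b\int g^2\,d\mu$ for all $g\in\mathcal A$, from which the Gaussian moment and (together with a local Poincar\'e inequality) the spectral gap are extracted exactly as you say; the paper itself uses the Poincar\'e consequence of \eqref{t2lyap} in Section 5. The gap is in your concluding step. The ``criterion'' you invoke --- that a Poincar\'e inequality together with $\int e^{\delta d^2(x,x_0)}d\mu<\infty$ for some $\delta>0$ implies $T_2$ --- is not a theorem of \cite{BGL} (who prove that the \emph{logarithmic Sobolev} inequality implies $T_2$) nor of \cite{Gozlan} (who proves that $T_2$ is equivalent to \emph{dimension-free} Gaussian concentration, which does not follow from Poincar\'e plus one-dimensional Gaussian integrability, since these do not tensorize to a Gaussian profile). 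Whether Poincar\'e plus square-exponential integrability implies $T_2$ was, at the time of writing, an open question (see the discussion in \cite{GLsurvey}); in any case it cannot be quoted as known, and it is the crux of the argument.

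The reason the actual proof works is that the Lyapunov condition is exploited at full functional strength rather than through its integrated consequence on $\mu$ alone. Taking $g=\sqrt f$ in the displayed inequality above yields $\int d^2(x,x_0)\,f\,d\mu\le \frac{1}{4c}\,I(fd\mu,d\mu)+\frac{b}{c}$ for \emph{every} probability density $f$, i.e.\ the second moment of any perturbed measure is controlled by its Fisher information; mere Gaussian integrability of $\mu$ gives nothing of the sort. The paper then refines the Hamilton--Jacobi/hypercontractivity argument of \cite{BGL}: one shows that $T_2$ already follows from a logarithmic Sobolev inequality restricted to the class of functions satisfying $\log(f^2)\le\log\bigl(\int f^2d\mu\bigr)+2\eta\bigl(d^2(x,x_0)+\int d^2(x,x_0)d\mu\bigr)$, and for such functions the entropy is bounded using precisely the moment--Fisher-information control above (the Poincar\'e inequality then serves to tighten the resulting defective inequality). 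To salvage your outline you would have to replace the unsupported ``Poincar\'e $+$ integrability $\Rightarrow T_2$'' step by this restricted log-Sobolev reduction (or by the later characterization of $T_2$ through restricted log-Sobolev inequalities due to Gozlan, Roberto and Samson), which is exactly where the truncation work you flag as the ``main obstacle'' actually lives.
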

\begin{proof}
Refining arguments of Bobkov-Gentil-Ledoux \cite{BGL}, the authors proved that it is in fact
sufficient to get a logarithmic Sobolev inequality for a restricted class of function, i.e.
functions $f$ such that
$$\log(f^2)\le\log\left(f^2d\mu\right)+2\eta(d^2(x,x_0)+\int d^2(x,x_0)d\mu).$$
Using truncation arguments, and mainly this class of function's property one sees how Lemma
\ref{lem:philyap} comes into play. We refer to \cite{CGW} for the tedious technical details.
\end{proof}

\begin{remark}
It is not difficult to remark that for $V(x)=x^3+3x^2\sin(x)+x$ near infinity, the Lyapunov
condition is verified. However the logarithmic Sobolev inequality does not hold in this case as
shown in \cite{CatGui1}.
\end{remark}

\subsection{Transportation and Fisher Information}
Transportation-information inequalities with Fisher inequalities were  only very recently studied
in \cite{GLWY,GLWW,GJLW}, because of their equivalence with deviation inequality for Markov
processes due to large deviations estimation. The two main interesting ones are for $p=1$ and
$p=2$: for all probability density $f$ w.r.t. $\mu$
$$W_p(fd\mu,d\mu)\le\sqrt{C\, I(fd\mu,d\mu)}.$$
In \cite{GLWY}, various criteria were studied, such as Lipschitz spectral gap. In particular if
$p=1$ and the distance is the trivial one, this i,equality is in fact equivalent to a Poincar\'e
inequality. These authors  also proved:
\begin{theorem}
Suppose that a Poincar\'e inequality holds, and that the following Lyapunov condition holds: there exists $W\ge 1$,$x_0$, $c,b>0$ such that
\begin{equation}
\label{wilyap} LW\le -cd^2(x,x0)W+b \, .
\end{equation}
Then we have for all probability density $f$ w.r.t. $\mu$
$$W_1(fd\mu,d\mu)\le\sqrt{C\, I(fd\mu,d\mu)}.$$
\end{theorem}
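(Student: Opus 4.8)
The plan is to reduce the $W_1$–Fisher transportation inequality to a combination of the Poincaré inequality and a quadratic-transportation-cost estimate coming from the Lyapunov function, following the strategy of \cite{GLWY}. Recall that $W_1$ has the dual (Kantorovich–Rubinstein) representation $W_1(fd\mu,d\mu)=\sup\{\int g\,(f-1)\,d\mu : \|g\|_{\mathrm{Lip}}\le 1\}$, so it suffices to bound $\int g\,(f-1)\,d\mu$ for every $1$-Lipschitz $g$ by $\sqrt{C\,I(fd\mu,d\mu)}$, uniformly in $g$. The first step is therefore to fix such a $g$ and, without loss of generality, recenter it so that $\int g\,d\mu=0$; then $\int g(f-1)d\mu=\int gf\,d\mu=\mathrm{Cov}_{\mu}(g,f)$ in the appropriate sense.

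The second step is to split this covariance using the Lyapunov structure exactly as in the Poincaré proof: writing $1\le -LW/(cd^2(\cdot,x_0)W + \text{const})$-type bounds is too crude here, so instead one controls the "large distance" region by \eqref{wilyap} and Lemma~\ref{lem:philyap}, and the "bounded distance" region by the Poincaré inequality. Concretely, one estimates $\int g^2\,d\mu$ (or a localized version) by $\int g^2\frac{-LW}{W}\,d\mu + (\text{boundary term})$, and Lemma~\ref{lem:philyap} with $\psi=\mathrm{id}$ turns $\int g^2\frac{-LW}{W}d\mu$ into $\int \Gamma(g)\,d\mu\le 1$ since $g$ is $1$-Lipschitz; the Lyapunov inequality \eqref{wilyap} simultaneously produces a factor $c\,d^2(x,x_0)$ in front of $g^2$, which yields a bound on $\int d^2(x,x_0)\,g^2\,d\mu$ of the form $\int d^2(x,x_0)g^2\,d\mu\le \frac1c\big(1 + b\int g^2 d\mu\big)$, and $\int g^2 d\mu$ is in turn controlled by the Poincaré constant (using $\Gamma(g)\le 1$ and $\int g\,d\mu=0$). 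This gives an a priori bound $\int (1+d^2(x,x_0))\,g^2\,d\mu\le C_0$ depending only on $c,b$ and the Poincaré constant, but \emph{not} on $g$.

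The third and final step is to feed this moment control into the covariance: by Cauchy–Schwarz,
$$
\Big|\int g\,(f-1)\,d\mu\Big|
= \Big|\int g\,(\sqrt f-1)(\sqrt f+1)\,d\mu\Big|
\le \Big(\int g^2(\sqrt f+1)^2 d\mu\Big)^{1/2}\Big(\int (\sqrt f -1)^2 d\mu\Big)^{1/2},
$$
and one estimates the second factor by the Poincaré inequality applied to $\sqrt f$, which produces $\int\Gamma(\sqrt f)\,d\mu = \tfrac14 I(fd\mu,d\mu)$, while the first factor is handled by the a priori bound just obtained (the weight $(\sqrt f+1)^2$ is absorbed using $\int f\,d\mu=1$ together with the control of $\int d^2(x,x_0)g^2 d\mu$, since $f$ has enough integrability against the Gaussian-type weight coming from the Lyapunov condition). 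Taking the supremum over $1$-Lipschitz $g$ then yields $W_1(fd\mu,d\mu)^2\le C\,I(fd\mu,d\mu)$ with $C$ explicit in terms of $c$, $b$ and the Poincaré constant. I expect the main obstacle to be the rigorous bookkeeping in this last step: one must ensure that the weight $d^2(x,x_0)$ (or $(1+d^2(x,x_0))$) is exactly strong enough to dominate the truncation/tail terms generated when splitting $(\sqrt f+1)^2$, and that all the integrals involving $f$ are finite — this is precisely where the quadratic growth in \eqref{wilyap} is used in an essential way, and where one should invoke the detailed computations of \cite{GLWY}.
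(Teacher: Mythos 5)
Your overall strategy (Kantorovich duality plus Cauchy--Schwarz, with Poincar\'e controlling a variance factor and the Lyapunov condition controlling a second-moment factor) is in the right spirit, but the last step has a genuine gap. After writing
$$\Big|\int g(f-1)\,d\mu\Big|\le \Big(\int g^2(\sqrt f+1)^2\,d\mu\Big)^{1/2}\Big(\int(\sqrt f-1)^2\,d\mu\Big)^{1/2},$$
you propose to bound the first factor by your a priori estimate $\int(1+d^2(x,x_0))g^2\,d\mu\le C_0$. That estimate is an integral against $\mu$, whereas the first factor contains $\int g^2 f\,d\mu$, an integral against the \emph{unknown} measure $fd\mu$. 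Since $g$ is only Lipschitz, $g^2$ grows like $d^2(x,x_0)$, so $\int g^2 f\,d\mu$ is essentially the second moment of $fd\mu$, which is not bounded uniformly over probability densities $f$ (take $f$ concentrated far from $x_0$); the remark that ``$f$ has enough integrability against the Gaussian-type weight'' has no content, since no integrability of $f$ is assumed. The only way to control $\int d^2(x,x_0)f\,d\mu$ is to apply the Lyapunov condition to the density itself: by \eqref{wilyap} and $W\ge1$ one has $d^2(x,x_0)\le \frac1c\big(\frac{-LW}{W}+b\big)$, and Lemma \ref{lem:philyap} applied to $\sqrt f$ gives $\int\frac{-LW}{W}f\,d\mu\le\int\Gamma(\sqrt f)\,d\mu=\frac14 I(fd\mu,d\mu)$, hence $\int d^2(x,x_0) f\,d\mu\le\frac{1}{4c}I+\frac bc$. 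Feeding this back, your first factor is of order $1+I$ and your second of order $C_P I$, so the product only yields $W_1^2\lesssim I+I^2$, which gives the desired $C\,I$ in the regime $I\lesssim1$ but fails for large $I$.

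For comparison, the paper does not use duality: it starts from $W_1(fd\mu,d\mu)\le\int d(x,x_0)|f-1|\,d\mu$ and applies Cauchy--Schwarz as $\big(\int|f-1|d\mu\big)^{1/2}\big(\int d^2(x,x_0)|f-1|d\mu\big)^{1/2}$; the first factor is bounded both by $2$ and by $\sqrt{2C_P I}$ (Poincar\'e being equivalent to a total-variation--information inequality), and the second by $\frac{1}{4c}I+\mathrm{const}$ exactly as above, using $|f-1|\le f+1$. With $\min(2,\sqrt{2C_P I})$ this closes the large-$I$ regime, while the small-$I$ regime requires a splitting of your type (weight on the $(\sqrt f+1)$ side). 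The two decompositions are thus complementary, and neither alone proves the inequality for all $f$; to complete your argument you must replace the $\mu$-moment bound on $g$ by the Lyapunov control of $\int d^2(x,x_0)f\,d\mu$, and add the complementary estimate for large $I$, as is done in \cite{GLWY}.
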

\begin{proof}
Let us scheme the proof: by \cite{Vbook}
\begin{eqnarray*}
W_1(fd\mu,d\mu)&\le&\int d(x,x_0)|f-1|d\mu\\
&\le&\sqrt{\int|f-1|d\mu}\sqrt{\int d^2(x,x_0)|f-1|d\mu}.
\end{eqnarray*}
For the first term, we use the fact that the Poincar\'e inequality is equivalent to a control of the total variation by the square of the Fisher Information, and for the second one the Lyapunov condition. One has of course to be careful as $|f-1|$ is not in the domain of $L$, so that an approximation argument has to be done. We refer to \cite{GLWY} for details.
\end{proof}

\begin{remark}
It is quite easy to remark that a logarithmic Sobolev inequality implies a transportation information inequality with Fisher information in the case $p=2$, but it is unknown if it is strictly weaker. {\it A fortiori}, no Lyapunov condition is known in the case $p=2$.
\end{remark}

\section{Logarithmic Sobolev inequalities under curvature}
Recall the classical Logarithmic Sobolev inequality, i.e. for all nice $f$
$$Ent_\mu(f^2)\le c\int\Gamma(f)d\mu \, .$$ This inequality has a long history.

Initiated by Gross \cite{gross} to study hypercontractivity, it was largely studied by many
authors due to its relationship with the study of decay to equilibrium, concentration of measure
property, efficacity in spin systems study, see \cite{bakry,logsob,ledoux01,Wbook,Vbook} for
further references. A breakthrough condition was the Bakry-Emery one: namely if
$Hess(V)+Ric\ge\delta>0$ then a logarithmic Sobolev holds. Kusuocka-Stroock gave a Lyapunov-type
condition (recovered by the study given in the Super-Poincar\'e case), and using Harnack
inequalities, Wang proved that in the lower bounded curvature case, i.e.
\begin{equation}\label{curv}
Hess(V)+Ric\ge\delta
\end{equation}
with $\delta$ maybe negative, a sufficient Gaussian integrability, i.e. $\int e^{((-\delta)_+/2+\epsilon)|x|^2}d\mu<\infty$, is enough
to prove a logarithmic Sobolev inequality. We will prove here
\begin{theorem}
Suppose that (\ref{t2lyap}) and (\ref{curv}) hold then $\mu$ satisfies a logarithmic Sobolev inequality.
\end{theorem}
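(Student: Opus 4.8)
The plan is to combine the $T_2$ transportation inequality coming from the Lyapunov condition (\ref{t2lyap}) with the curvature lower bound (\ref{curv}), using the reverse implication in the Otto--Villani circle of ideas. Recall that by the previous theorem, condition (\ref{t2lyap}) already yields a transportation inequality $W_2(fd\mu,\mu)\le\sqrt{K\,H(fd\mu,d\mu)}$. The strategy is then to feed this $T_2$ inequality, together with the curvature bound $Hess(V)+Ric\ge\delta$ (with $\delta$ possibly negative), into the Otto--Villani-type machinery to upgrade $T_2$ to a full logarithmic Sobolev inequality.

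The key steps, in order, would be as follows. First, invoke the theorem on transportation and Kullback information to obtain $T_2(K)$ from (\ref{t2lyap}). Second, recall the HWI-type inequality valid under a curvature lower bound $\delta$ (possibly negative): for all nice $f$,
$$
\Ent_\mu(f^2)\le 2\,W_2(f^2 d\mu,\mu)\sqrt{\int\Gamma(f)\,d\mu}\;-\;\delta\,W_2^2(f^2 d\mu,\mu),
$$
which is precisely the point where (\ref{curv}) enters. Third, substitute the $T_2$ bound $W_2^2(f^2d\mu,\mu)\le K\,\Ent_\mu(f^2)$ into the HWI inequality. Setting $x=\sqrt{\Ent_\mu(f^2)}$ and $y=\sqrt{\int\Gamma(f)d\mu}$, one gets a quadratic inequality of the shape $x^2\le 2\sqrt{K}\,xy+(-\delta)_+\,Kx^2$ (controlling the sign of the $\delta$ term appropriately); when the coefficient $(-\delta)_+K$ is not already $<1$ this naive argument fails, and one instead iterates/tensorizes or uses the sharper form of HWI together with a careful optimization. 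Fourth, solve the resulting inequality for $x^2$ in terms of $y^2$ to obtain $\Ent_\mu(f^2)\le c\int\Gamma(f)d\mu$ with an explicit (non-sharp) constant $c$ depending on $K$ and $\delta$.

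The main obstacle is the case $\delta<0$: the curvature defect makes the HWI inequality lose its immediate coercivity, so one cannot simply divide through. The resolution, following Wang's argument under lower bounded curvature, is to use that $T_2$ already encodes enough Gaussian-type integrability to run a local log-Sobolev inequality (from the curvature bound on bounded sets, via Bakry--Émery applied to a perturbed potential or via Harnack/gradient estimates) and then to patch the local log-Sobolev inequality to a global one using the Lyapunov function $W$ exactly as in the Super Poincar\'e section — indeed Lemma \ref{lem:philyap} with $\psi=\mathrm{id}$ is the gluing device. In other words, the delicate part is showing that (\ref{t2lyap}) plus (\ref{curv}) together imply the hypotheses (a $\phi$-Lyapunov condition with superlinear $\phi$ forcing, plus a local log-Sobolev inequality) needed to invoke the log-Sobolev version of the Super Poincar\'e theorem; once that is in place, the conclusion follows. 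I would refer to \cite{CGW} for the technical details of this bootstrap.
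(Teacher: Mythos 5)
Your main line of attack has a genuine gap, and you have in fact identified it yourself: plugging the $T_2$ bound $W_2^2(fd\mu,\mu)\le K\,H(fd\mu,\mu)$ into the HWI inequality produces $H\le 2\sqrt{K I}\sqrt{H}+\tfrac{(-\delta)_+}{2}K\,H$, which is vacuous as soon as $(-\delta)_+K\ge 2$, i.e.\ precisely in the interesting regime of strongly negative curvature. Your proposed repair (iterate/tensorize, or run a local log-Sobolev inequality and glue with the Lyapunov function as in the Super Poincar\'e section) is a different and unsubstantiated route: you never show that (\ref{t2lyap}) and (\ref{curv}) yield a superlinear $\phi$-Lyapunov condition together with a local log-Sobolev inequality, and no ``log-Sobolev version'' of the Super Poincar\'e theorem is actually set up. So the argument as written does not close.

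The paper avoids the obstruction entirely by \emph{not} routing through $T_2$. The key step is to bound the transport cost by the \emph{Fisher} information rather than the entropy: from the Lyapunov condition (\ref{t2lyap}) and Lemma \ref{lem:philyap} (the ``maximization argument''), one gets
$$W_2^2(fd\mu,d\mu)\;\le\; 2\int d^2(x,x_0)\,|f-1|\,d\mu\;\le\; I(fd\mu,d\mu)+C.$$
Substituting this into HWI gives
$$H\;\le\;2\sqrt{I}\sqrt{I+C}+\tfrac{(-\delta)_+}{2}(I+C)\;\le\;c_1 I+c_2,$$
a \emph{defective} logarithmic Sobolev inequality that holds for any value of $\delta$, since the right-hand side is entirely in terms of $I$. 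The defect is then removed by Rothaus' lemma, using the Poincar\'e inequality which also follows from (\ref{t2lyap}) via Section 3. If you want to salvage your write-up, replace the substitution $W_2^2\le K H$ by the Fisher-information bound above and add the Rothaus tightening step at the end.
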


\begin{proof}
Remark first that by (\ref{t2lyap}), a Poincar\'e inequality holds due to the effort of Section 3. Remark also that by Lyapunov conditions
(and maximization argument)
$$W_2^2(fd\mu,d\mu)\le 2\int d^2(x,x_0)|f-1|d\mu\le I(fd\mu,d\mu)+C.$$
Use now a HWI inequality of Otto-Villani \cite{OV} (see also \cite{BGL}):
$$H(fd\mu,d\mu)\le2\sqrt{I(fd\mu,d\mu)}W_2(fd\mu,d\mu)-\frac\delta2W_2^2(fd\mu,d\mu).$$
so that a defective logarithmic Sobolev inequality holds, that may be tightened via Rothaus' Lemma due to the Poincar\'e inequality. We refer to \cite{CGW} for details.
\end{proof}
\begin{remark}
Let us give here an example not covered by Wang's condition. On $\R^2$, take $V(x,y)=r^2g(\theta)$ in polar coordinates with $g(\theta)=2+\sin(k\theta)$. It is not hard to remark that $Hess(V)$ is bounded and that the Lyapunov condition (\ref{t2lyap}) is verified. However Wang's integrability condition is not verified, despite the fact that a logarithmic Sobolev inequality do hold by our theorem.
\end{remark}

\begin{remark}
One may also give Lyapunov conditions when $\delta$ is replaced by some unbounded function of the distance.
\end{remark}

\medskip

{\bf Acknowledgements:} A.G. thanks the organizers of this beautiful Grenoble Summer School and wonderful conference.

\bibliographystyle{plain}
\bibliography{cg-procgrenoble}

\end{document}